\newtheorem{proposition}{Proposition}
\newtheorem{theorem}{Theorem}
\DeclareMathOperator{\tr}{tr}
\newcommand{\nc}{\newcommand}
\nc{\disp}{\displaystyle}
\nc{\argmax}{\mathop{\rm arg~max}\limits}
\nc{\argmin}{\mathop{\rm arg~min}\limits}
\nc{\lr}[1]{\ensuremath{\left(#1\right)}}
\nc{\mlr}[1]{\mleft(#1\mright)}
\nc{\alr}[1]{\ensuremath{\left\langle#1\right\rangle}} %angle bracket
\nc{\blr}[1]{\ensuremath{\left[#1\right]}} %bracket
\nc{\clr}[1]{\ensuremath{\left\{#1\right\}}} %curly bracket
\nc{\nlr}[1]{\ensuremath{\left\|#1\right\|}}
\nc{\vlr}[1]{\ensuremath{\left|#1\right|}}
\nc{\amlr}[1]{\mleft\langle#1\mright\rangle} %angle bracket
\nc{\bmlr}[1]{\mleft[#1\mright]} %bracket
\nc{\cmlr}[1]{\mleft\{#1\mright\}} %curly bracket
\nc{\nmlr}[1]{\mleft\|#1\mright\|}
\nc{\fnmlr}[1]{\mleft\|#1\mright\|_\mathcal{F}}
\nc{\vmlr}[1]{\mleft|#1\mright|}
\nc{\ffnmlr}[1]{\mleft\|#1\mright\|_F}
\nc{\rmvec}{\mathop{\rm vec}}
\nc{\rmtr}{\mathop{\rm tr}}
\nc{\tp}{\mathsf{T}}
\nc{\ex}{\mathbb{E}}
\nc{\normal}{\mathcal{N}}
\nc{\defeq}{:=}
\nc{\cpad}{~}
\nc{\setdef}[2]{\clr{#1 \mathrel{}\middle|\mathrel{} #2}}
\nc{\Z}{\mathbb{Z}}
\nc{\R}{\mathbb{R}}
\nc{\definite}[1]{\mathbb{S}_{#1}^{++}}
\nc{\semidefinite}[1]{\mathbb{S}_{#1}^{+}}
\nc{\N}{\mathbb{N}}
\nc{\ra}{\rightarrow}
\nc{\calA}{{\mathcal A}}    
\nc{\calB}{{\mathcal B}}    
\nc{\calC}{{\mathcal C}}    
\nc{\calD}{{\mathcal D}}    
\nc{\calE}{{\mathcal E}}    
\nc{\calF}{{\mathcal F}}    
\nc{\calG}{{\mathcal G}}    
\nc{\calH}{{\mathcal H}}    
\nc{\calI}{{\mathcal I}}    
\nc{\calJ}{{\mathcal J}}    
\nc{\calK}{{\mathcal K}}    
\nc{\calL}{{\mathcal L}}    
\nc{\calM}{{\mathcal M}}    
\nc{\calN}{{\mathcal N}}    
\nc{\calO}{{\mathcal O}}    
\nc{\calP}{{\mathcal P}}    
\nc{\calQ}{{\mathcal Q}}    
\nc{\calR}{{\mathcal R}}    
\nc{\calS}{{\mathcal S}}    
\nc{\calT}{{\mathcal T}}    
\nc{\calU}{{\mathcal U}}    
\nc{\calV}{{\mathcal V}}    
\nc{\calW}{{\mathcal W}}    
\nc{\calX}{{\mathcal X}}    
\nc{\calY}{{\mathcal Y}}    
\nc{\calZ}{{\mathcal Z}}    
\nc{\scrA}{{\mathscr A}}    
\nc{\scrB}{{\mathscr B}}    
\nc{\scrC}{{\mathscr C}}    
\nc{\scrD}{{\mathscr D}}    
\nc{\scrE}{{\mathscr F}}    
\nc{\scrF}{{\mathscr F}}    
\nc{\scrG}{{\mathscr G}}    
\nc{\scrH}{{\mathscr H}}    
\nc{\scrI}{{\mathscr I}}    
\nc{\scrJ}{{\mathscr J}}    
\nc{\scrK}{{\mathscr K}}    
\nc{\scrL}{{\mathscr L}}    
\nc{\scrM}{{\mathscr M}}    
\nc{\scrN}{{\mathscr N}}    
\nc{\scrO}{{\mathscr O}}    
\nc{\scrP}{{\mathscr P}}    
\nc{\scrQ}{{\mathscr Q}}    
\nc{\scrR}{{\mathscr R}}    
\nc{\scrS}{{\mathscr S}}    
\nc{\scrT}{{\mathscr T}}    
\nc{\scrU}{{\mathscr U}}    
\nc{\scrV}{{\mathscr V}}    
\nc{\scrW}{{\mathscr W}}    
\nc{\scrX}{{\mathscr X}}    
\nc{\scrY}{{\mathscr Y}}    
\nc{\scrZ}{{\mathscr Z}}    
\nc{\bbA}{{\mathbb A}}    
\nc{\bbB}{{\mathbb B}}    
\nc{\bbC}{{\mathbb C}}    
\nc{\bbD}{{\mathbb D}}    
\nc{\bbE}{{\mathbb E}}    
\nc{\bbF}{{\mathbb F}}    
\nc{\bbG}{{\mathbb G}}    
\nc{\bbH}{{\mathbb H}}    
\nc{\bbI}{{\mathbb I}}    
\nc{\bbJ}{{\mathbb J}}    
\nc{\bbK}{{\mathbb K}}    
\nc{\bbL}{{\mathbb L}}    
\nc{\bbM}{{\mathbb M}}    
\nc{\bbN}{{\mathbb N}}    
\nc{\bbO}{{\mathbb O}}    
\nc{\bbP}{{\mathbb P}}    
\nc{\bbQ}{{\mathbb Q}}    
\nc{\bbR}{{\mathbb R}}    
\nc{\bbS}{{\mathbb S}}    
\nc{\bbT}{{\mathbb T}}    
\nc{\bbU}{{\mathbb U}}    
\nc{\bbV}{{\mathbb V}}    
\nc{\bbW}{{\mathbb W}}    
\nc{\bbX}{{\mathbb X}}    
\nc{\bbY}{{\mathbb Y}}    
\nc{\bbZ}{{\mathbb Z}}    
\ifdefined\p@enumi{
\renewcommand{\p@enumi}{A}
}
\nc{\orthogonal}{O(n)}  
\newcommand{\cyan}[1]{{\color{black}{#1}}} %????
\newcommand{\morim}[1]{{\color{black}{#1}}} %????
\newcommand{\kk}[1]{{\color{black}{#1}}} %????
\newcommand{\res}[1]{{\color{black}{#1}}} % changed for review
\newcommand{\rev}[1]{{\color{black}{#1}}} % changed for review
\title{Minimum energy density steering of linear systems with Gromov-Wasserstein terminal cost}
\author{Kohei Morimoto and Kenji Kashima
\thanks{The authors are with the Graduate School of Informatics, Kyoto University,
	Kyoto, Japan
        {\tt\small \res{kohei.morimoto.73r@st.kyoto-u.ac.jp}; kk@i.kyoto-u.ac.jp}.
        This work was supported by JSPS KAKENHI Grant Number JP21H04875 and
the joint project of Kyoto University and Toyota Motor
Corporation, titled ``Advanced Mathematical Science for
Mobility Society".}%
% \thanks{$^{2}$Kenji Kashima is with the Graduate School of Informatics, Kyoto University,
% 	Kyoto, Japan
%         {\tt\small kk@i.kyoto-u.ac.jp}}%
}
\begin{document}

\maketitle
\thispagestyle{empty}
\pagestyle{empty}

%%%%%%%%%%%%%%%%%%%%%%%%%%%%%%%%%%%%%%%%%%%%%%%%%%%%%%%%%%%%%%%%%%%%%%%%%%%%%%%%
\begin{abstract}

\rev{In this paper, we newly formulate and solve the optimal density control problem with Gromov-Wasserstein (GW) terminal cost in discrete-time linear Gaussian systems. Differently from the Wasserstein or Kullback-Leibler distances employed in the existing works, the GW distance quantifies the difference in \emph{shapes} of the distribution, which is invariant under translation and rotation. Consequently, our formulation allows us to find small energy inputs that achieve the desired shape of the terminal distribution, which has practical applications, e.g., robotic swarms.  
}
We demonstrate that the problem can be reduced to a Difference of Convex (DC) programming, which is efficiently solvable through the DC algorithm. 
Through numerical experiments, we confirm that \rev{the state distribution reaches the terminal distribution that can be realized with the minimum control energy among those having the specified shape.}

% Abstract for the 1st submission
% In this study, we address optimal control problems focused on steering the probabilistic distribution of state variables in linear dynamical systems. 
% Specifically, we address the problem of controlling the \rev{shape} of Gaussian state distributions to predefined targets at terminal times. This task is not yet explored in existing works that primarily aim to exactly match state distributions.
% By employing the Gromov-Wasserstein (GW) distance as the terminal cost, we formulate a problem that seeks to align the \rev{shape} of the state density with that of a desired distribution. 
% This approach allows us to extend the control objectives to capture the distribution's shape.
% We demonstrate that this complex problem can be reduced to a Difference of Convex (DC) programming, which is efficiently solvable through the DC algorithm. 
% Through numerical experiments, we confirm that the terminal distribution indeed gets closer to the desired \rev{shape} of the target distribution.
\end{abstract}
% \begin{IEEEkeywords}
%     Optimal Density Control, Optimal Transport, Gromov-Wasserstein Distance
% \end{IEEEkeywords}

%%%%%%%%%%%%%%%%%%%%%%%%%%%%%%%%%%%%%%%%%%%%%%%%%%%%%%%%%%%%%%%%%%%%%%%%%%%%%%%%

%%%%%%%%%%%%%%%%%%%%%%%%%%%%%%%%%%%%%%%%%%%%%%%%%%%%%%%%%%%%%%%%%%%%%%%%%%%%%%%%
\section{Introduction}

% \deletable{In this paper, we consider the optimal density control problem with Gromov-Wasserstein terminal cost in discrete-time linear Gaussian systems.}
Optimal density control is defined as the problem of controlling the probability distribution of state variables to the desired distribution in a dynamic system.
% Optimal density controlは品質保証や航空機の制御、ロボティクスなど状態の誤差を管理することが非常に重要なシステムについて利用されることが期待される。
Promising applications of optimal density control include systems in which it is important to manage errors in the state, such as quality control and aircraft control, as well as quantum systems in which the distribution of the state itself is the object of control \cite{chen_optimal_2016}.
% Moreover, it is also promising to be used in fields where the distribution of the state is itself the object of control, such as quantum systems.

\rev{The problem addressed in this paper is a variant of the (finite-time) covariance steering problem for discrete-time linear Gaussian systems. Among the long history of this line of research \cite{anderson_inverse_1969, hotz_covariance_1985}, the most related recent works are as follows: 
}
% Existing research on optimal density control addresses the challenge of aligning the state distribution with a desired target distribution at the final time step.
% Two main approaches have been explored: 
hard constraint formulations, as seen in \cite{liu2022optimal, rapakoulias2023discrete, ito_2023_lq, ito_2023_schrodinger}, where the terminal state distribution is enforced as a constraint; and soft constraint formulations, as seen in \cite{halder_finite_2016, balci_covariance_2021, balci_exact_2022}, where the Wasserstein distance between the terminal distribution and the target distribution is incorporated as a cost.
In particular, Balci et al. \cite{balci_exact_2022} presented an optimal distributional control problem for discrete-time linear Gaussian systems using Wasserstein distance as the terminal cost, formulated as semidefinite programming (SDP), a form of convex programming, to derive globally optimal control policies.

\rev{To motivate the present work, let us consider the state distribution as an ensemble of particles or a multi-robotic swarm \cite{KM}. In such applications, a particular \emph{shape} of the formation is required to be achieved, but its location and orientation are often irrelevant. For example, they may seek to align in a single row in a two-dimensional region (See Fig.~\ref{fig:traj} below), or only the configuration may be specified based on inter-agent distance \cite{dimarogonas2008stability}.
    The aforementioned formulations can address the realization of the configuration with a fixed orientation but cannot address the optimization with respect to the rotation. 
}
% \deletable{
% This clearly contrasts our formulation, which 
% However, the challenge of controlling the \rev{shape} of the terminal distribution remains unaddressed within the density control framework. }
To tackle this issue, we propose a novel density control problem incorporating the Gromov-Wasserstein distance (GW distance) as the terminal cost \cite{memoli2011gromov}. 
The GW distance is the distance between probability distributions and can measure the closeness of the \rev{shape} of the probability distributions.
By integrating the GW distance between the state and target distributions into the terminal cost, we can formulate the problem of controlling the \rev{shape} of the state distribution.
\rev{This problem can be viewed as a simultaneous optimization of the dynamical steering and the rotation of the target shape, which clearly contrasts the existing formulations. }

In this study, we focus on scenarios where the initial and target distributions are Gaussian and seek the optimal control policy among linear feedback control laws. 
While computing the GW distance between arbitrary distributions is challenging, it has recently been shown that the Gaussian Gromov-Wasserstein (GGW) distance, which is a relaxation of the GW distance for normal distributions, can be easily calculated \cite{delon_gromovwasserstein_2022}.
We show that the optimal density control problem with GW terminal cost can be formulated as a difference of convex (DC) programming problem. 
We solve the problem by the DC algorithm (DCA) \cite{tao1997convex}, a technique for solving DC programming problems through iterative convex relaxation. 
Remarkably, the convexified problem is transformed into a SDP form, which can be efficiently solved using standard convex programming solvers.

The rest of the paper is organized as follows.
In Section \ref{sec:problem setting}, we introduce the concept of the GW distance and present the optimal density steering problem with the GW distance as the terminal cost. 
Section \ref{sec:dc_programming} discusses the formulation of the problem as a DC programming problem, highlighting the objective function's nature as a difference of convex function and deriving the convexified sub-problem used in the DC algorithm. 
The numerical simulations are presented in Section \ref{sec:experiment}.
Finally, we conclude our paper in Section \ref{sec:conclusion}.

\noindent {\bf Notation}\hspace{5mm}
Let $\semidefinite{n}$ and $\definite{n}$ denote the sets of $n$-dimensional positive semidefinite matrices and positive definite matrices, respectively.
Let $\orthogonal$ denote the set of $n$-dimensional orthogonal matrices.
For matrices, \cyan{$\ffnmlr{\cdot}$ denotes the Frobenius norm.}
For a convex function $f$, $\partial f(x)$ denotes the set of subgradients of $f$ at $x$.
Let $\mathcal{P}(\mathcal{X})$ denote the set of all probability distributions over $\mathcal{X}$.
Let $\normal_n(\mu, \Sigma)$ denote the multivariate normal distribution with mean $\mu \in \R^n$ and covariance $\Sigma \in \cyan{\semidefinite{n}}$.
Let $\mathscr{N}_n$ denote the set of all $n$-dimensional multivariate normal distributions.

\section{Problem Setting}\label{sec:problem setting}
\subsection{Gromov-Wasserstein distance}

The optimal transport distance is generally defined as the minimized transport cost of transporting one probability distribution to another probability distribution.
The GW distance is the distance between probability distributions and is a variant of optimal transport distance, similar to the Wasserstein distance.
\cyan{Given two metric spaces $\mathcal{X}, \mathcal{Y}$,} the set of transports $\Pi$ between probability distributions $\mu \in \mathcal{P}(\mathcal{X})$ and $\nu \in \mathcal{P}(\mathcal{Y})$ is defined by
\begin{multline}
\Pi(\mu, \nu) := \{\pi(x, y) | \int_x \pi(x, y) dx = \nu(y), \\ \int_y \pi(x, y) dy = \mu(x)\}.
\end{multline}
Each element $\pi(x, y)$ in $\Pi(\mu, \nu)$ represents how the weight $\mu(x)$ at $x$ is transported to $y$, with the condition that $\int_x \pi(x, y) dx = \nu(y)$ ensuring that the transported destination becomes $\nu(y)$.
The GW distance is defined by
\begin{multline}
GW^2(\mu, \nu) \defeq \\ \inf_{ \pi\in\Pi(\mu, \nu)} \int \int \lr{\nmlr{x-x'}_{\mathcal{X}} - \nmlr{y-y'}_{\mathcal{Y}}}^2 \label{eq:GW}\\
\pi(x, y)\pi(x', y')dx dy dx' dy',
\end{multline}
where $\nmlr{x-x'}_{\mathcal{X}}$ and $\nmlr{y-y'}_{\mathcal{Y}}$ represent the norms in the spaces $\mathcal{X}$ and $\mathcal{Y}$, respectively.
The GW distance is small when points that are close (resp.~farther apart) before transportation are brought closer together (resp.~farther apart) after transportation. Conversely, the GW distance increases when points that were initially close are moved farther apart after transportation.
\rev{Therefore, this definition quantifies the shape difference between two probabilistic distributions.
For comparison, recall that the Wasserstein distance is defined as 
\begin{equation}\label{eq:Wasserstein}
W^2(\mu, \nu) \defeq \inf_{ \pi\in\Pi(\mu, \nu)} \int \int d(x,y)^2
\pi(x, y)dx dy,
\end{equation}
where $\mathcal{X}=\mathcal{Y}$ and $d(\cdot,\cdot)$ is a suitable distance on $\mathcal{X}$.  
While the Wasserstein distance is sensitive to the absolute positions or orientations of the distributions, the GW distance is invariant under isometric transformations such as translations and rotations.}
% While the Wasserstein distance is the cost of transporting one probability distribution to the other, the GW distance \rev{quantifes the difference of shapes} when the probability distribution is transported with as little \rev{shape} change as possible.
% In addition, the GW distance has the property that it is invariant with respect to rotation and translation of the distribution.

The GW distance involves a non-convex quadratic program over transport $\pi$, making it challenging to compute the GW distance between arbitrary probability distributions.
Recently, it has been shown that the Gaussian Gromov-Wasserstein (GGW) distance, where the transport is constrained to be Gaussian distribution only, can be explicitly expressed in terms of the parameters of the normal distributions \cite{delon_gromovwasserstein_2022}. 
Specifically, the GGW between Gaussian distributions $\mu \in \mathscr{N}_m$ and $\nu \in \mathscr{N}_n$ is defined by
\begin{multline}
GGW^2(\mu, \nu) \defeq \\ \inf_{\pi\in\Pi(\mu, \nu) \cap \mathscr{N}_{m+n} } \int \int \lr{\nmlr{x-x'} - \nmlr{y-y'}}^2 \\ \pi\lr{x, y}\pi(x', y')dx dy dx' dy'
\end{multline}
where $\Pi(\mu, \nu) \cap \mathscr{N}_{m+n}$ represents the restriction of the transport to the ($m+n$)-dimensional Gaussian distribution $\mathscr{N}_{m+n}$.
For $\mu = \normal_m(\mu_0, \Sigma_0), \nu = \normal_n(\mu_1, \Sigma_1)$, it holds that
\begin{multline}
\label{eq:ggw}
GGW^2(\mu, \nu) = \\ 4\lr{\tr(\Sigma_0) - \tr(\Sigma_1)}^2 + 8\ffnmlr{D_0 -D_1}^2,
\end{multline}
where $D_0, D_1$ are the diagonal matrices with the eigenvalues of $\Sigma_0, \Sigma_1$ sorted in descending order, and if $m \neq n$, the missing elements are filled with zeros.

\subsection{Optimal density steering with Gromov-Wasserstein terminal cost}

Let $n_x$ be the dimension of the state space and $n_u$ the dimension of the input, and consider the following discrete-time linear Gaussian system.
\begin{subequations}
    \begin{align}
        \label{eq:dynamics}
        x_{k+1} &= Ax_k + Bu_k + w_k \\
        x_0 &= \normal(0, \Sigma_0) \\
        w_k &\sim \normal \lr{0, W_k}
    \end{align}
\end{subequations}
Here, the covariance matrix of the initial Gaussian distribution is $\Sigma_0 \in \definite{n_x}$ and that of the noise is $W_k \in \semidefinite{n_x}$. 
For control input $u_k$, We use a stochastic linear control policy as
\begin{align}
\label{eq:policy}
u_k(x) = \normal(K_k x, Q_k),
\end{align}
where $K_k$ is feedback gain and $Q_k \succeq O$ is covariance of Gaussian distribution.
We consider the problem of minimizing the sum of the control costs and the Gromov-Wasserstein distance between the terminal distribution $\rho_{N} = \normal(\mu_N, \Sigma_N)$ ($\Sigma_N \in \definite{n_x}$) and the target distribution $\rho_r = \normal(0, \Sigma_r)$ ($\Sigma_r \in \semidefinite{n_x}$). 
The objective function is represented as 
\begin{subequations}
\begin{align}
\min_{K_k, Q_k} & J(K_k, Q_k) \\
J(K_k, Q_k) &= \lambda \ex \left[ \sum_{k=0}^{N-1} {u_k^T R_k u_k} \right] + GGW^2(\rho_N, \rho_r), \label{eq:objective}
\end{align}
\end{subequations}
where $R_k \in \definite{n_x}$ denotes the weights for control cost.
Using the control policy \eqref{eq:policy} in system \eqref{eq:dynamics}, the probability distribution of the state $x_N$ at the terminal time $N$ will also be the Gaussian distribution. Thus, by substituting equations \eqref{eq:ggw} and \eqref{eq:policy} into equation \eqref{eq:objective}, we obtain
\begin{dmath}
\label{eq:objective_assign}
J(K_k, Q_k) = \lambda \sum_{k=0}^{N-1} {\tr\lr{R_k \lr{ K_k\Sigma_k K_k^T + Q_k}}} + 4\lr{ \tr(\Sigma_N) - \tr(\Sigma_r)}^2 + 8\ffnmlr{\Sigma_N}^2 - 16 \tr{(D_ND_r)},
\end{dmath}
where $\Sigma_k$ is the covariance matrix of the state $x_k$, and $D_N$, $D_r$ are diagonal matrices with the eigenvalues of $\Sigma_N$, $\Sigma_r$ arranged in descending order. 
The dynamics of $\Sigma_k$ is given by
\morim{
\begin{dmath}
\label{eq:covar_dynamics}
    \Sigma_{k+1} = A\Sigma_k A^T + B K_k \Sigma_k A^T + A \Sigma_k K_k^T B^T + B K_k \Sigma_k K_k^T B^T + BQ_k B^T + W_k.
\end{dmath}
}
Here, we introduce the variable transformations \morim{$M_k := P_k\Sigma_k^{-1} P_k^T+Q_k$} and $P_k := K_k \Sigma_k$ as in the Ref.~\cite{chen_optimal_2016-1, balci_exact_2022}.
To guarantee the invertibility of the variable transformation, we need $M_k \succeq O$ and \morim{$M_k - P_k \Sigma_k^{-1} P_k^T \succeq O$}, which implies that the following condition must be satisfied:
\begin{align}
\label{eq:cond_q_positive}
\begin{bmatrix}
M_k & P_k \\
P_k^T & \Sigma_k
\end{bmatrix} \succeq O.
\end{align}
This condition is added to the optimization problem to ensure the feasibility of the solution.
Finally, from the \eqref{eq:objective_assign}, \eqref{eq:covar_dynamics} and \eqref{eq:cond_q_positive}, we can write the optimization problem to be solved as follows:

\begin{subequations}
\label{eq:problem}
\begin{align}
    \min_{\substack{\Sigma_k, M_k, P_k  }} &  \quad  J(\Sigma_N, M_k)\\
    \begin{split}
        J(\Sigma_N, M_k) &= \lambda \sum_{k=0}^{N-1} \tr\lr{R_k M_k} \\ &+  4\lr{\tr(\Sigma_N) - \tr(\Sigma_r)}^2 \\ &+ 8\ffnmlr{\Sigma_N}^2 - 16 \tr{(D_ND_r)} 
    \end{split} \label{eq:J}\\
    \begin{split}
    \mathrm{s.t.} \quad 
    & 
    \Sigma_{k+1} = A_k\Sigma_k A_k^T  + A_k P_k^T B_k^T \\ & + B_k P_k A_k^T + B_k M_k B_k^T + W_k
    \end{split} \label{eq:covar_constrint}
    \\
    & \begin{bmatrix}
    M_k & P_k \\
    P_k^T & \Sigma_k  
  \end{bmatrix} \succeq O \label{eq:sdp_constrint}
    % & \mu_{k+1} = A_k \mu_k + B_k v_k\\
    % & z \geq -\tr(S) - \tr(T) \\
\end{align}
\end{subequations}

Although we assumed a stochastic strategy as a control law in \eqref{eq:policy}, the optimal solution turns out to be a deterministic strategy.
\begin{theorem}
\label{theorem:deterministic}
Suppose $\{A_k\}_{k=0}^{N-1}$ are invertible. Then, the optimal policy in problem \eqref{eq:problem} is deterministic, that is, the optimal solution satisfies $Q_k = M_k - K_k\Sigma_k K_k^T = O$.
\end{theorem}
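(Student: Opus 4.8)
The plan is to exploit that the terminal term $GGW^2(\rho_N,\rho_r)$ in \eqref{eq:J} depends on the decision variables only through $\Sigma_N$, and to reduce the statement to a stage-wise optimality argument. I would take any optimizer $(\Sigma_k^\star,M_k^\star,P_k^\star)$ of \eqref{eq:problem} and freeze the whole covariance trajectory $\{\Sigma_k^\star\}_{k=0}^N$. Once the trajectory is frozen the terminal term is constant, the running cost $\lambda\sum_{k}\tr(R_kM_k)$ separates over $k$, and each pair of constraints \eqref{eq:covar_constrint}--\eqref{eq:sdp_constrint} couples only $(\Sigma_k^\star,\Sigma_{k+1}^\star,M_k,P_k)$. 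Hence optimality forces each $(M_k^\star,P_k^\star)$ to minimize $\tr(R_kM_k)$ over the single-stage feasible set obtained by fixing $\Sigma_k=\Sigma_k^\star$ and requiring that \eqref{eq:covar_constrint} reproduce $\Sigma_{k+1}^\star$; otherwise a cheaper stage solution would keep the whole trajectory feasible and strictly lower $J$. I would emphasize \emph{why} the trajectory must be frozen exactly: the map $\Sigma_N\mapsto GGW^2(\rho_N,\rho_r)$ is not monotone in the Loewner order (the term $-16\tr(D_ND_r)$ rewards larger eigenvalues), so one cannot simply shrink $Q_k$, let $\Sigma_N$ decrease, and hope the terminal cost improves; the admissible perturbation must leave $\Sigma_N$ unchanged.

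Each single-stage problem is convex: writing $K_k=P_k(\Sigma_k^\star)^{-1}$ and $Q_k=M_k-P_k(\Sigma_k^\star)^{-1}P_k^\tp$, constraint \eqref{eq:sdp_constrint} becomes $Q_k\succeq O$, constraint \eqref{eq:covar_constrint} reads
\begin{equation*}
(A_k+B_kK_k)\Sigma_k^\star(A_k+B_kK_k)^\tp+B_kQ_kB_k^\tp=\Sigma_{k+1}^\star-W_k,
\end{equation*}
and the objective is $\tr(R_kK_k\Sigma_k^\star K_k^\tp)+\tr(R_kQ_k)$. Introducing a symmetric multiplier $\Lambda_k$ for the matrix equality and $\Theta_k\succeq O$ for $Q_k\succeq O$, the stationarity conditions take the form
\begin{equation*}
\Theta_k=R_k-B_k^\tp\Lambda_kB_k,\qquad \Theta_kK_k=B_k^\tp\Lambda_kA_k,
\end{equation*}
together with complementary slackness $\Theta_kQ_k=O$.

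The crux, and the only place where the hypothesis enters, is to show $\Theta_k\succ O$; then $\Theta_kQ_k=O$ immediately gives $Q_k=O$, which is the claim. Suppose $\Theta_kv=0$ for some $v$. Left-multiplying the second stationarity condition by $v^\tp$ gives $0=v^\tp\Theta_kK_k=(B_kv)^\tp\Lambda_kA_k$, and since $A_k$ is invertible this yields $(B_kv)^\tp\Lambda_k=0$, i.e. $\Lambda_kB_kv=0$. Substituting into the first condition, $\Theta_kv=R_kv-B_k^\tp\Lambda_kB_kv=R_kv$, so $R_kv=0$ and hence $v=0$ because $R_k\succ O$. Therefore $\Theta_k\succ O$, and every optimizer satisfies $Q_k=O$. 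The main obstacle I anticipate is not this linear-algebra step but the justification of the KKT conditions themselves: one must argue attainment (which follows from coercivity of $\tr(R_kM_k)$ since $R_k\succ O$ penalizes the free directions of $Q_k$) together with a constraint qualification, most cleanly by recasting the stage problem as a linear semidefinite program in the block variable $\begin{bmatrix}\Sigma_k^\star & P_k^\tp\\ P_k & M_k\end{bmatrix}\succeq O$ and invoking strong duality.
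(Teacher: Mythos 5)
Your proof is correct, and it reaches the same algebraic core as the paper but packages it differently. The paper applies the KKT conditions to the full multi-stage problem \eqref{eq:problem} at once, in the $(M_k,P_k)$ variables: a block multiplier $F_k$ for the LMI \eqref{eq:sdp_constrint} and $E_k$ for the dynamics, with complementary slackness factorized through the Schur complement to yield $F_k^{00}(M_k-P_k\Sigma_k^{-1}P_k^\tp)=O$ and $F_k^{01\tp}(M_k-P_k\Sigma_k^{-1}P_k^\tp)=O$, then invertibility of $A_k$ and $R_k\succ O$ give $R_k Q_k=O$, hence $Q_k=O$. You instead first freeze the optimal covariance trajectory to decouple the problem into single-stage programs --- a reduction the paper does not make --- and then run KKT per stage in the $(K_k,Q_k)$ parametrization, proving strict definiteness of the multiplier $\Theta_k$ so that $\Theta_k Q_k=O$ forces $Q_k=O$. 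The two arguments are in exact correspondence: your $\Theta_k$ is $-F_k^{00}=R_k-B_k^\tp E_kB_k$ (up to the sign of the multiplier), your identity $\Theta_k K_k=B_k^\tp\Lambda_k A_k$ mirrors \eqref{eq:proof1}, and both proofs consume the invertibility of $A_k$ and the definiteness of $R_k$ at the same points. What your route buys: the stage-wise reduction makes the use of KKT easier to justify (each stage is a small SDP where a constraint qualification and strong duality are straightforward), and your observation that $\Sigma_N\mapsto GGW^2$ is not Loewner-monotone (because of the $-16\tr(D_N D_r)$ term) correctly explains why one must perturb at fixed $\Sigma_N$ rather than argue by shrinking $Q_k$ --- a subtlety the paper's global treatment silently sidesteps. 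What the paper's route buys is brevity: no decoupling step, and the three identities drop out of one stationarity computation. One caveat on your write-up: the assertion that ``each single-stage problem is convex'' is true only in the $(M_k,P_k)$ coordinates; in $(K_k,Q_k)$ your equality constraint is quadratic in $K_k$, so the stationarity conditions you write need either the diffeomorphism $(M_k,P_k)\leftrightarrow(K_k,Q_k)$ (valid since $\Sigma_k^\star\succ O$) to transfer the KKT system, or the direct SDP recasting you mention at the end --- your closing remark effectively supplies this, but it should be stated as part of the argument rather than as an anticipated obstacle. Also, attainment of the stage minimum needs no coercivity argument: the restriction of the assumed global optimizer is itself a stage optimizer.
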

\begin{proof}
The proof is provided using the same arguments in the Ref.~\cite{balci_exact_2022}.
We utilize the Karush-Kuhn-Tucker conditions. 
Let $E_k$ denote the Lagrange multiplier for constraint \eqref{eq:covar_constrint}, and let $F_k$ denote the Lagrange multiplier for constraint \eqref{eq:sdp_constrint}, represented as
\begin{align*}
    F_k = \begin{bmatrix}
    F_k^{00} & F_k^{01} \\
    F_k^{10} & F_k^{11}  
  \end{bmatrix}.
\end{align*}
From the stationarity condition, we obtain
\begin{align}
    & B_k^T E_k A_k + F_k^{01} = 0 \label{eq:proof1}\\
    & R_k - B_k^T E_k B_k + F_k^{00} = 0. \label{eq:proof2}
\end{align}
The complementary slackness condition yields
\begin{align}
    \begin{bmatrix}
    F_k^{00} & F_k^{01} \\
    F_k^{10} & F_k^{11}  
  \end{bmatrix}
\begin{bmatrix}
    M_k & P_k \\
    P_k^T & \Sigma_k
\end{bmatrix} = O,
\end{align}
implying
\begin{align*}
    \begin{bmatrix}
    F_k^{00} & F_k^{01} \\
    F_k^{10} & F_k^{11}  
  \end{bmatrix}
\begin{bmatrix}
    I & P_k\Sigma_k^{-1} \\
    O & I
\end{bmatrix}
\begin{bmatrix}
    M_k - P_k\Sigma_k^{-1}P_k^T & O \\
    O & \Sigma_k
\end{bmatrix} = O
\end{align*}
from the definiteness of $\Sigma_k$.
Thus, we obtain
\begin{align}
    & F_{00} \lr{M_k - P_k \Sigma_k^{-1}P_k^T} = O \label{eq:proof3}\\
    & F_{01}^T \lr{M_k - P_k \Sigma_k^{-1}P_k^T} = O. \label{eq:proof4}
\end{align}
Subsequently, by combining \eqref{eq:proof1}, \eqref{eq:proof4}, and the invertibility of $A_k$, we derive
\begin{align}
    \label{eq:proof5}
    E_k B_k \lr{M_k - P_k \cyan{\Sigma_k^{-1}} P_k^T} = O.
\end{align}
Finally, from \eqref{eq:proof2}, \eqref{eq:proof3}, and \eqref{eq:proof5}, we conclude that 
\begin{align*}
    &(R_k - B_k^T E_k B_k) (M_k - P_k \Sigma_k^{-1} P_k^T)\\
    &=  R_k (M_k - P_k \Sigma_k^{-1} P_k^T) \\
    &= O.
\end{align*}
and due to the positive definiteness of $R_k$, we have $M_k - P_k \Sigma_k^{-1} P_k^T = O$.
\end{proof}

\section{Formulation as Difference of Convex Programming}

\begin{figure}
    \centering
    \includegraphics[scale=0.35]{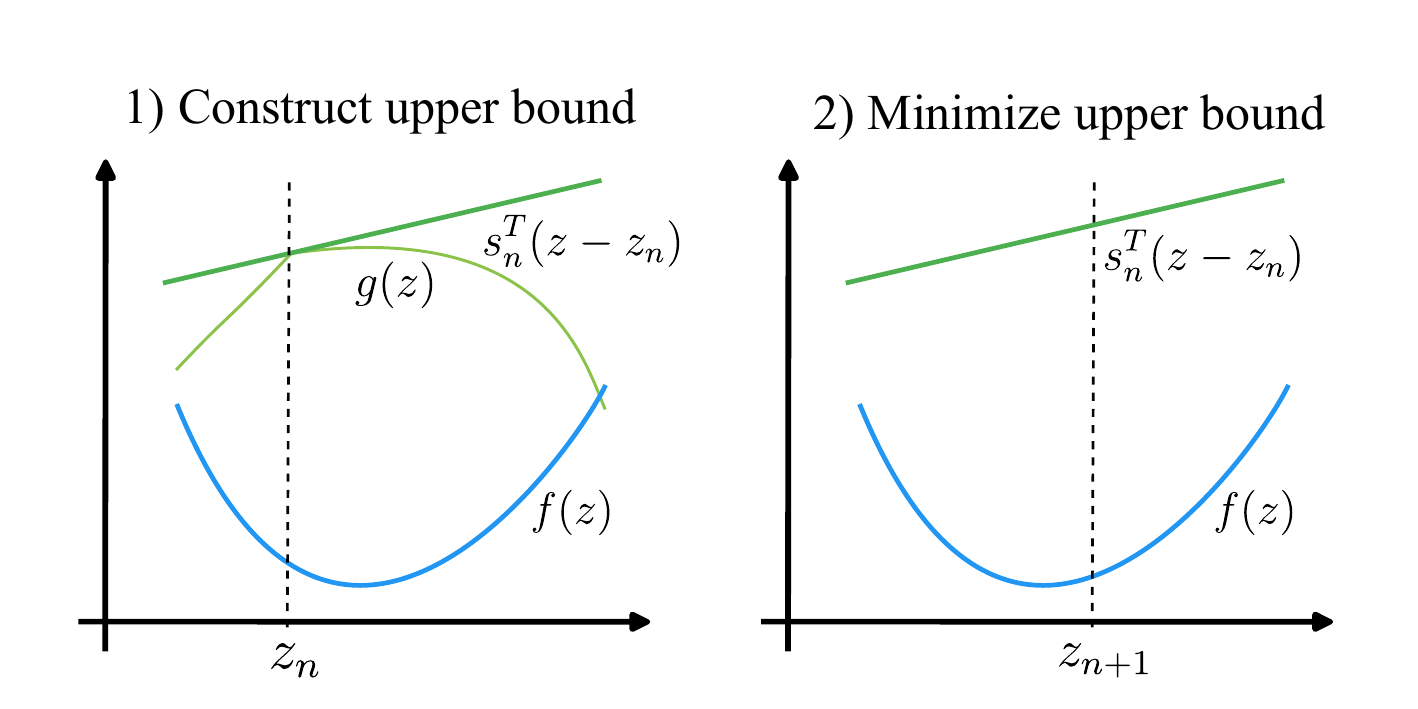}
    \caption{Visualization for the algorithm of DCA.} \label{fig:dcalgorithm}
\end{figure}
\label{sec:dc_programming}
In this section, we show that problem \eqref{eq:problem} is a DC programming problem and solve it using the DCA, an optimization method for DC programming. 
The DC programming problem is an optimization problem whose objective function is a DC function, which is expressed as the difference between two convex functions.
Since the DC programming problem is a non-convex optimization problem, finding a global optimum is generally challenging.
However, several optimization methods that efficiently find solutions by exploiting the properties of DC functions have been proposed. 
These include global optimization techniques using branch and bound methods \cite{horst1999dc}, and methods for finding sub-optimal solutions, such as the DCA \cite{tao1997convex} and the Concave-Convex Procedure (CCCP) \cite{yuille2001concave}.

\cyan{The DCA iteratively constructs a convex upper bound for the objective function, minimizes this upper bound, and then updates the upper bound using the minimizer of the previous iteration.}
Assume the objective function $h$ to be optimized is expressed as $h(z) = f(z) - g(z)$ by convex functions $f(z)$ and $g(z)$ defined on a convex set $\Omega$. 
The DCA iterates the following steps until convergence:
\begin{enumerate}
\item Construct the upper bound $\hat{h}(z)$ of $h(z)$ as
\begin{align*}
    \hat{h}(z) = f(z) - s_n^T (z - z_n),
\end{align*}
 where $s_n \in \partial g(z_n)$. 
\item Set $z_{n+1} = \min_{z\in \Omega} \hat{h}(z)$.
\end{enumerate}
\cyan{Because $\hat{h}(z)$ is a convex function over the convex set, we are able to minimize this convex subproblem efficiently.}
It is known \cite{tao1997convex} that when the optimal value of the problem is finite and the sequences ${z_{n}}$, ${s_{n}}$ are bounded, any accumulation points $z^{\infty}$ of ${z_{n}}$ are critical points of $f-g$, which implies $0 \in \partial(f - g)(z^\infty)$.
In Figure \ref{fig:dcalgorithm}, we show the visualization of the DCA algorithm.

In the next proposition and theorem, we show that problem \eqref{eq:problem} is a DC programming problem.
\begin{proposition}[Anstreicher and Wolkowicz\cite{anstreicher2000lagrangian}]
\label{prop:anstreicher}
Let $A$ and $B$ be $n \times n$ symmetric matrices decomposed into their eigenvalues as $A = V \Lambda V^T$ and $B = W \Xi W^T$, respectively. Assume the eigenvalues in $W$ and $V$ are arranged in descending order. Then,
\begin{align*}
\max_{U\in \orthogonal} \tr(UAU^TB)
\end{align*}
has an optimal solution $U^* = WV^T$, and the optimal value is $\tr(\Lambda\Xi)$.
\end{proposition}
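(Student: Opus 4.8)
The plan is to reduce the maximization to a linear program over the set of doubly stochastic matrices and then invoke the rearrangement inequality. First I would absorb the fixed orthogonal factors by the change of variables $Z := W^T U V$; since $V$ and $W$ are orthogonal, $Z$ ranges over all of $\orthogonal$ exactly as $U$ does. Using the cyclic property of the trace, $\tr(UAU^TB) = \tr(U V\Lambda V^T U^T W\Xi W^T) = \tr(Z\Lambda Z^T\Xi)$, so the problem becomes $\max_{Z\in\orthogonal}\tr(Z\Lambda Z^T\Xi)$ with $\Lambda$ and $\Xi$ diagonal.

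Next I would expand the objective entrywise. Writing $\Lambda=\mathrm{diag}(\lambda_1,\ldots,\lambda_n)$ and $\Xi=\mathrm{diag}(\xi_1,\ldots,\xi_n)$, a direct computation gives $\tr(Z\Lambda Z^T\Xi) = \sum_{i,j}\xi_i\lambda_j Z_{ij}^2$. The key observation is that the matrix $S$ with entries $S_{ij}:=Z_{ij}^2$ is doubly stochastic: its entries are nonnegative, and because $Z$ is orthogonal each row and each column has unit Euclidean norm, so every row sum and column sum of $S$ equals $1$. Hence the objective is bounded above by $\max_{S}\sum_{i,j}\xi_i\lambda_j S_{ij}$, where $S$ ranges over the convex set of all doubly stochastic matrices.

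Then I would apply the Birkhoff--von Neumann theorem: the doubly stochastic matrices form a polytope whose extreme points are precisely the permutation matrices. Since a linear functional over a polytope attains its maximum at an extreme point, the relaxed maximum equals $\max_{\sigma}\sum_i \xi_i\lambda_{\sigma(i)}$ over permutations $\sigma$. Because the eigenvalues in both $\Lambda$ and $\Xi$ are sorted in descending order, the rearrangement inequality shows this is maximized by the identity permutation, yielding the value $\sum_i\xi_i\lambda_i=\tr(\Lambda\Xi)$. Crucially, the relaxation is tight because permutation matrices are themselves orthogonal; in particular $Z=I$ (equivalently $S=I$) is feasible and meets the bound. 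Unwinding the substitution, $Z=I$ corresponds to $U^*=WV^T$, which therefore attains the optimal value $\tr(\Lambda\Xi)$.

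The step I expect to require the most care is justifying that the upper bound obtained by relaxing to doubly stochastic matrices is actually achieved \emph{within} $\orthogonal$, rather than only in the larger relaxed set. This is where the fact that permutation matrices are simultaneously the extreme points of the Birkhoff polytope and genuine orthogonal matrices does the essential work, ensuring that no gap is introduced by the relaxation. The remaining pieces, namely the trace manipulation and the rearrangement inequality, are then routine.
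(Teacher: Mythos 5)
Your proof is correct, but note that the paper itself offers no proof of this proposition at all: it is stated as a citation to Anstreicher and Wolkowicz, so there is no in-paper argument to match. Your route is the classical elementary one: the substitution $Z = W^T U V$ reduces the problem to diagonal $\Lambda,\Xi$; the identity $\tr(Z\Lambda Z^T\Xi)=\sum_{i,j}\xi_i\lambda_j Z_{ij}^2$ exposes the orthostochastic matrix $S_{ij}=Z_{ij}^2$; relaxing to the Birkhoff polytope and invoking Birkhoff--von Neumann reduces the bound to permutations, where the rearrangement inequality (which, as you need here, holds for arbitrary real sequences, so possibly negative eigenvalues cause no trouble) selects the identity; and tightness follows because permutation matrices, in particular $Z=I$, are orthogonal, giving $U^*=WV^T$. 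You correctly flag this tightness step as the crux --- the relaxation would be useless if its maximizer were not realizable as a squared orthogonal matrix, and the coincidence of Birkhoff extreme points with orthogonal matrices is exactly what closes the gap. By contrast, the cited source derives the result through Lagrangian/semidefinite duality for orthogonally constrained quadratic programs, establishing strong duality for $\max_{UU^T=I}\tr(UAU^TB)$; that machinery is heavier but extends to perturbed and more general quadratic matrix constraints, whereas your argument is self-contained, uses only linear programming over a polytope plus a rearrangement argument, and is arguably the cleaner choice if the paper wanted an inline proof. One small point of rigor worth a sentence in a write-up: when eigenvalues repeat, the eigendecompositions are not unique, but the argument is insensitive to the choice, since any descending-ordered $V$ and $W$ yield the same optimal value and a valid maximizer $WV^T$.
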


\begin{theorem}
$J(\Sigma_N, M_k)$ in \eqref{eq:J} is a DC function.
\end{theorem}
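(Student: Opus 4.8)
The plan is to decompose $J$ term by term and identify each piece as either convex or the negative of a convex function. Writing $J = f - g$, I would collect into $f$ the three terms
\[
\lambda \sum_{k=0}^{N-1}\tr\lr{R_k M_k}, \qquad 4\bigl(\tr(\Sigma_N)-\tr(\Sigma_r)\bigr)^2, \qquad 8\ffnmlr{\Sigma_N}^2,
\]
and set $g = 16\,\tr(D_N D_r)$. The goal then reduces to verifying that both $f$ and $g$ are convex in the decision variables $(\Sigma_k, M_k, P_k)$.

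The convexity of $f$ is routine. The first term is linear in the $M_k$; the second is the square of an affine function of $\Sigma_N$ (since $\tr(\Sigma_r)$ is a constant), hence convex; and the third is a squared Frobenius norm, a convex quadratic in $\Sigma_N$. Because $\Sigma_N$ and the $M_k$ are themselves linear images of the optimization variables, each term lifts to a convex function of the full variable vector, and sums of convex functions are convex.

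The crux is the convexity of $g$, that is, of the map $\Sigma_N \mapsto \tr(D_N D_r)$, where $D_N$ carries the eigenvalues of $\Sigma_N$ in descending order and $D_r$ is fixed. Here I would invoke Proposition~\ref{prop:anstreicher} with $A = \Sigma_N$ and $B = \Sigma_r$, which yields the variational identity
\[
\tr(D_N D_r) = \max_{U\in\orthogonal} \tr\bigl(U\Sigma_N U^T \Sigma_r\bigr).
\]
For each fixed $U$, the map $\Sigma_N \mapsto \tr(U\Sigma_N U^T \Sigma_r)$ is linear, so the right-hand side exhibits $\tr(D_N D_r)$ as a pointwise supremum of linear functions of $\Sigma_N$, which is convex. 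Consequently $g$ is convex and the term $-16\,\tr(D_N D_r) = -g$ is concave.

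Having shown that $f$ and $g$ are both convex, the representation $J = f - g$ certifies that $J$ is a DC function, completing the argument. The only nontrivial step is the convexity of $g$: the entire difficulty is concentrated there and is resolved precisely by the max-representation supplied by Proposition~\ref{prop:anstreicher}, which converts the otherwise nonsmooth eigenvalue expression into a supremum of affine functions. Everything else is elementary convexity bookkeeping.
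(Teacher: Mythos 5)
Your proof is correct and follows essentially the same route as the paper: the same split into the three ``easy'' convex terms plus $g(\Sigma_N)=\tr(D_N D_r)$, with convexity of $g$ established via the max-representation $\tr(D_N D_r)=\max_{U\in\orthogonal}\tr\lr{U\Sigma_N U^T\Sigma_r}$ from Proposition~\ref{prop:anstreicher} and the fact that a pointwise maximum of linear functions is convex. Your added remark that $\Sigma_N$ and the $M_k$ are linear images of the decision variables is a harmless (and slightly more careful) elaboration of what the paper treats as ``clearly convex.''
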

\begin{proof}
Since $\lambda \sum_{k=0}^{N-1} (\tr(R_k M_k)) + 4(\tr(\Sigma_N) - \tr(\Sigma_r))^2 + 8\ffnmlr{\Sigma_N}^2$ is clearly a convex function, it suffices to show that 
\kk{
\begin{align}\label{eq:g}
    g(\Sigma_N):=\tr(D_ND_r)
\end{align}
}
is a convex function. From Proposition \ref{prop:anstreicher}, we have
\begin{align*}
\tr(D_ND_r) = \max_{U\in \orthogonal} \tr(U\Sigma_NU^T\Sigma_r).
\end{align*}
Thus, $\tr(D_ND_r)$ is the maximum of linear functions, making it a convex function with respect to $\Sigma_N$. 
Precisely, let us consider a scalar $\alpha \in [0, 1]$ \cyan{and} two positive definite matrices $\Sigma$ and $\Sigma'$. Then, \cyan{the convex combination \morim{$\alpha \Sigma + (1-\alpha) \Sigma'$} satisfies the following inequality:}
\begin{align*}
&\max_{U\in \orthogonal} \tr(U\lr{\alpha\Sigma+(1-\cyan{\alpha})\Sigma'}U^T\Sigma_r) \\
&\leq \max_{U\in \orthogonal} \alpha\tr(U\Sigma U^T\Sigma_r) + (1-\alpha)\tr(U\Sigma' U^T\Sigma_r)\\
&\begin{multlined}
        \leq \alpha\max_{U\in \orthogonal}\tr(U\Sigma U^T\Sigma_r) \\+ (1-\alpha)\max_{U'\in \orthogonal}\tr(U'\Sigma' {U'}^T \Sigma_r). 
\end{multlined}
\end{align*}
\end{proof}

Furthermore, in the next theorem, we derive a subgradient of the concave part of $J(M_k, \Sigma_k)$ to construct the upper bound in DCA. 
\begin{theorem}
\label{theorem:subgradient}
Assume the eigenvalue decompositions: $\Sigma_N = V_N D_N V_N^T$ and $\Sigma_r$ is $\Sigma_r = V_r D_r V_r^T$. Then, 
$V_N D_r V_N^T \in \semidefinite{n_x}$
is a subgradient of $g(\Sigma_N)$ in \eqref{eq:g}.
% $l(\Sigma_N):= \tr (D_ND_r)$.
\end{theorem}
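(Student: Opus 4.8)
The plan is to exploit the variational representation established in the previous theorem, namely $g(\Sigma_N) = \max_{U \in O(n_x)} \tr(U \Sigma_N U^T \Sigma_r)$, which exhibits $g$ as a pointwise supremum of functions that are \emph{linear} in $\Sigma_N$. For such a supremum, the gradient of any linear piece that is \emph{active} at the base point is automatically a subgradient of the supremum there, so the whole task reduces to (i) identifying the maximizing orthogonal matrix at $\Sigma_N$ and (ii) reading off the gradient of the corresponding linear functional. Rather than invoking a general pointwise-maximum rule, I would verify the defining subgradient inequality directly, which keeps the argument fully self-contained.

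First I would rewrite each inner function using the cyclic property of the trace and symmetry as $\tr(U \Sigma_N U^T \Sigma_r) = \tr\big(\Sigma_N\,(U^T \Sigma_r U)\big)$, displaying it as a linear functional of $\Sigma_N$ with coefficient matrix $U^T \Sigma_r U$ (relative to the Frobenius pairing). Next I would apply Proposition~\ref{prop:anstreicher} with $A = \Sigma_N = V_N D_N V_N^T$ and $B = \Sigma_r = V_r D_r V_r^T$ to obtain the maximizer $U^\star = V_r V_N^T$ and the optimal value $\tr(D_N D_r)$. Substituting $U^\star$ into the coefficient matrix and using $V_r^T \Sigma_r V_r = D_r$ together with the orthogonality of $V_r$ gives $(U^\star)^T \Sigma_r U^\star = V_N V_r^T \Sigma_r V_r V_N^T = V_N D_r V_N^T$, the candidate subgradient (which is positive semidefinite since $D_r \succeq O$).

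It then remains to confirm the subgradient inequality $g(\Sigma') \ge g(\Sigma_N) + \tr\big(V_N D_r V_N^T (\Sigma' - \Sigma_N)\big)$ for every admissible $\Sigma'$. At the base point, $\tr(V_N D_r V_N^T \Sigma_N) = \tr(D_r D_N) = g(\Sigma_N)$, so the inequality collapses to $g(\Sigma') \ge \tr(V_N D_r V_N^T \Sigma')$. But by cyclicity the right-hand side equals $\tr(U^\star \Sigma' (U^\star)^T \Sigma_r)$, which is precisely the value of the objective $\tr(U \Sigma' U^T \Sigma_r)$ at the single feasible point $U = U^\star$; since $g(\Sigma')$ is the maximum over all $U \in O(n_x)$, it dominates this value. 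This establishes the inequality, hence $V_N D_r V_N^T \in \partial g(\Sigma_N)$.

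I do not anticipate a genuine obstacle: the substantive content is the variational formula already proved, and the only care needed is the bookkeeping of the trace identities (cyclic invariance and the orthogonality relations $V_N^T V_N = V_r^T V_r = I$) and fixing the Frobenius inner product as the pairing in which the subgradient lives. The one conceptual point worth stating explicitly is that evaluating the objective at the single feasible $U^\star$, rather than appealing to differentiability of the max, is exactly what turns the active-gradient heuristic into a rigorous one-line verification.
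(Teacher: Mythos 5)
Your proposal is correct, and it follows the same skeleton as the paper's proof: both rest on the variational formula $g(\Sigma_N)=\max_{U\in\orthogonal}\tr(U\Sigma_N U^T\Sigma_r)$, both invoke Proposition~\ref{prop:anstreicher} to identify the maximizer $U^\star=V_rV_N^T$, and both compute the candidate $({U^\star})^T\Sigma_r U^\star = V_N D_r V_N^T$. Where you differ is the final justification: the paper appeals to Danskin's theorem to conclude that the gradient of the active linear piece is a subgradient, whereas you verify the defining inequality $g(\Sigma')\ge g(\Sigma_N)+\tr\bigl(V_N D_r V_N^T(\Sigma'-\Sigma_N)\bigr)$ directly --- checking via $V_N^TV_N=I$ that $\tr(V_N D_r V_N^T\Sigma_N)=\tr(D_rD_N)=g(\Sigma_N)$, and then observing by cyclicity that $\tr(V_N D_r V_N^T\Sigma')=\tr(U^\star\Sigma'({U^\star})^T\Sigma_r)\le g(\Sigma')$ since the left side is the objective evaluated at the single feasible point $U^\star$. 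Your route is more elementary and self-contained: it sidesteps the compactness/regularity hypotheses one would need to state Danskin's theorem carefully, and for a pointwise maximum of linear functionals the ``active gradient is a subgradient'' fact is exactly the two-line computation you perform, so nothing is lost. The paper's citation buys brevity; your verification buys rigor at negligible extra length, and your parenthetical note that $V_N D_r V_N^T\succeq O$ (because $D_r\succeq O$) also justifies the membership $V_N D_r V_N^T\in\semidefinite{n_x}$ asserted in the theorem statement, a point the paper's proof leaves implicit.
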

\begin{proof}
From Proposition \ref{prop:anstreicher}, we have
\begin{align*}
U^* := \argmax_{U\in \orthogonal} \tr(U\Sigma_NU^T\Sigma_r) = V_r V_N^T.
\end{align*}
Therefore, by Danskin's theorem\cite{bertsekas1997nonlinear}, a subgradient can be obtained by differentiating the function inside the max operation with respect to $\Sigma_N$ and then substituting $U^*$. Hence, 
\begin{align*}
{U^*}^T\Sigma_rU^* = V_N V_r^T \Sigma_r V_r V_N^T = V_N D_r V_N^T \in \partial \kk{g}(\Sigma_N).
\end{align*}
\end{proof}
Therefore, the convex subproblem in DCA is formulated as
\begin{subequations}
\begin{align}
    \label{eq:partial_problem}
    \begin{split}
        \min_{\substack{\Sigma_k, M_k, P_k  }} \quad & \lambda  \sum_{k=0}^{N-1} \tr\lr{R_k M_k} + \\ &4\lr{\tr(\Sigma_N) - \tr(\Sigma_r)}^2 \\+ &8\ffnmlr{\Sigma_N}^2 - 16 \tr{(\Sigma_N  {V_N^{(n)T}} \cyan{D_r} V_N^{(n)})} 
    \end{split} \\
    \begin{split}
    \mathrm{s.t.} \quad 
    & 
    \Sigma_{k+1} = A_k\Sigma_k A_k^T  + A_k P_k^T B_k^T \\ & + B_k P_k A_k^T + B_k M_k B_k^T + W_k
    \end{split}
    \\
    & \begin{bmatrix}
    M_k & P_k \\
    P_k^T & \Sigma_k  
  \end{bmatrix} \succeq 0 
    % & \Sigma_k, M_k \succeq 0
\end{align}
\end{subequations}
where $V_N^{(n)}$ is the matrix obtained by decomposing the optimal $\Sigma_N$ in the $n$-th iteration of DCA. The term $\tr{(\Sigma_N  V_N^{(n)T}\cyan{D_r}V_N^{(n)})}$ represents linear lower bound of convex function $l(\Sigma_N)$ using a subgradient obtained in Theorem \ref{theorem:subgradient}.
The subproblem is a semidefinite programming problem (SDP), which can be efficiently solved. 
We use the solution from each optimization step to update the value of $V_N$. 
By iteratively applying the optimization process, the solution progressively approaches a sub-optimal solution for the original problem \eqref{eq:problem}.

In Theorem \ref{theorem:deterministic}, we showed that the optimal policy for Problem (\ref{eq:problem}) is deterministic. 
\kk{The following theorem shows that the proposed algorithm generates a sequence of deterministic control policies:}
\begin{theorem}
    When $\{A_k\}_{k=0}^{N-1}$ are invertible, the optimal policy of the subproblem \eqref{eq:partial_problem} is also deterministic.
\end{theorem}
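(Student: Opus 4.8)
The plan is to exploit the fact that the subproblem \eqref{eq:partial_problem} shares both its feasible set and its dependence on the policy variables $M_k$ and $P_k$ with the original problem \eqref{eq:problem}: the dynamics constraints \eqref{eq:covar_constrint} and the semidefinite constraints \eqref{eq:sdp_constrint} are literally identical, and the only change to the objective is that the concave term $-16\tr(D_N D_r)$ has been replaced by its linear surrogate $-16\tr(\Sigma_N V_N^{(n)\tp} D_r V_N^{(n)})$. Since this surrogate depends on $\Sigma_N$ alone and involves neither $M_k$ nor $P_k$, the entire KKT analysis used in the proof of Theorem \ref{theorem:deterministic} can be replayed almost verbatim.

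Concretely, I would introduce the same Lagrange multipliers $E_k$ for \eqref{eq:covar_constrint} and $F_k$ (block-partitioned as before) for \eqref{eq:sdp_constrint}, and form the Lagrangian of \eqref{eq:partial_problem}. Because the only objective terms containing $M_k$ or $P_k$ are $\lambda\tr(R_k M_k)$, which are unchanged between the two problems, the stationarity conditions obtained by differentiating with respect to $M_k$ and $P_k$ are exactly \eqref{eq:proof1} and \eqref{eq:proof2}. The complementary slackness condition for the semidefinite constraint is likewise identical, so using the positive definiteness of $\Sigma_k$ (inherited from $\Sigma_0 \succ O$ through \eqref{eq:covar_constrint} and the invertibility of $A_k$) I would recover \eqref{eq:proof3} and \eqref{eq:proof4}, then combine \eqref{eq:proof1}, \eqref{eq:proof4} and the invertibility of $A_k$ to obtain \eqref{eq:proof5}. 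Finally, \eqref{eq:proof2}, \eqref{eq:proof3} and \eqref{eq:proof5} give $R_k(M_k - P_k\Sigma_k^{-1}P_k^{\tp}) = O$, whence $M_k - P_k\Sigma_k^{-1}P_k^{\tp} = O$ by positive definiteness of $R_k$; this is precisely the determinism $Q_k = O$.

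The main point to verify, and the only genuine difference from Theorem \ref{theorem:deterministic}, is that the modified objective does not contaminate the stationarity conditions in $M_k$ and $P_k$; this reduces to observing that the linearized terminal term is affine in $\Sigma_N$ and constant in $(M_k, P_k)$, which is immediate from its form. A pleasant strengthening worth recording is that, unlike the nonconvex DC problem \eqref{eq:problem} where the KKT system is only necessary, the subproblem \eqref{eq:partial_problem} is a convex SDP, so its KKT conditions are both necessary and sufficient; hence \emph{every} optimizer of the subproblem is deterministic, which is exactly what is needed to ensure that the sequence produced by the DCA consists of deterministic policies.
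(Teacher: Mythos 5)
Your proof is correct and takes exactly the paper's approach: the paper's own proof is the single sentence that the KKT conditions from Theorem~\ref{theorem:deterministic} carry over to the subproblem \eqref{eq:partial_problem}, and your write-up simply makes explicit why (the linearized terminal term is independent of $M_k$ and $P_k$, so the stationarity, complementary slackness, and invertibility arguments are unchanged). Your closing observation that the subproblem is a convex SDP, so KKT is also sufficient and \emph{every} optimizer is deterministic, is a small but genuine strengthening beyond what the paper states.
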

\begin{proof}
    The KKT conditions used in the proof of Theorem \ref{theorem:deterministic} also hold in the subproblem \eqref{eq:partial_problem}.
\end{proof}

\section{Numerical Experiments}
\label{sec:experiment}

In this section, we perform numerical optimization for problem \eqref{eq:problem} using DCA.
We set the parameters in this experiment as 
\begin{align*}
A_k = \begin{bmatrix}
1.0 & 0.1 \\
-0.3 & 1.0 \\
\end{bmatrix},~
&B_k = \begin{bmatrix}
0.7 \\
0.4 \\
\end{bmatrix} \\

\Sigma_0 = \begin{bmatrix}
3 & 0 \\
0 & 3 \\
\end{bmatrix},~

& W_k = 0.5I_2,\\
R_k = 1.0,~&N = 10.
\end{align*}
\kk{Figure~\ref{fig:autonomous} shows the time evolution of state covariance of the uncontrolled system. }
For the implementation of the convex subproblem in DCA, we used the MOSEK solver\cite{mosek} and the CVXPY modeler\cite{diamond2016cvxpy}.

\begin{figure}
    \centering
    \includegraphics[scale=0.36]{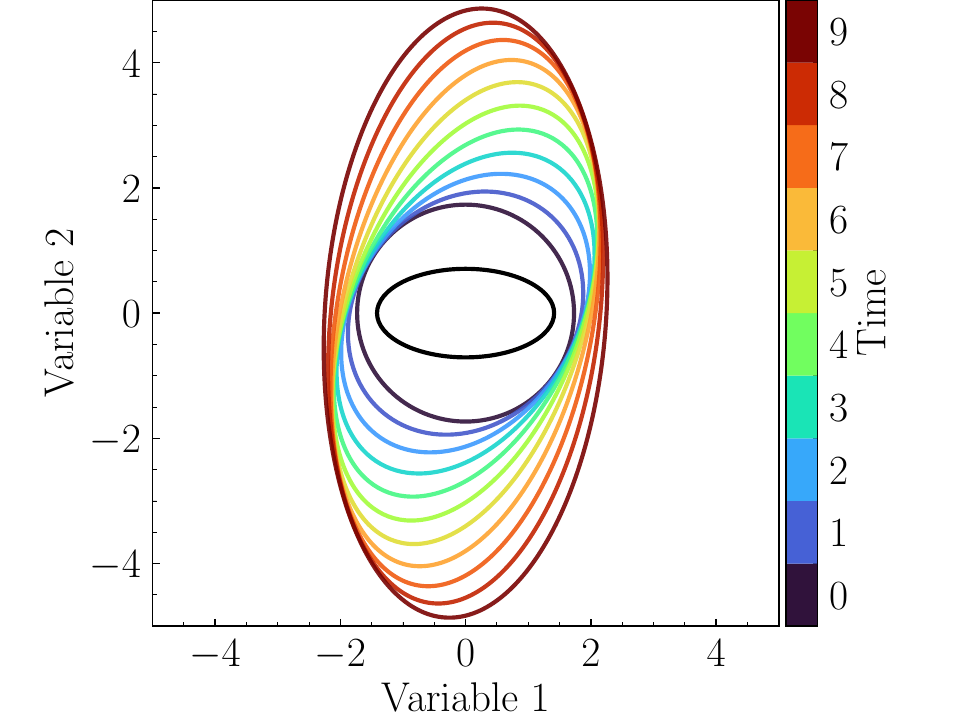}
    \caption{The snapshots of the covariance matrices of uncontrolled system.
    The colored lines indicate the $2\sigma$ range of $\Sigma_k$, while the black line represents $\Sigma_r$ in \eqref{eq:Sigma_r1}.
} \label{fig:autonomous}
\end{figure}

\subsubsection{Line alignment}
First, we consider the case where the desired density is Gaussian $\rho_r=\normal(0,10)$, which is not on $\mathbb{R}^2$, but on $\mathbb{R}$. The problem seeks the optimal policy to align the terminal distribution into one line.
\rev{Note that $W(\rho_N,\rho_r)$ in \eqref{eq:Wasserstein} does not make sense\footnote{\rev{One may think we can embed it onto $\R^2$ (e.g., by \eqref{eq:Sigma_r2}) and consider the Wasserstein distance. However, there is a rotational degree of freedom, which affects the resulting distance. We can interpret that our formulation optimizes this rotation in the sense of the required control energy; See Fig.~\ref{fig:traj}. } }
because $\mathcal{X}\neq \mathcal{Y}$. 
In contrast, the GW distance $GW(\rho_N,\rho_r)$ in \eqref{eq:GW} is well-defined and,}
% Wassersteinを使う場合は\mathbb{R}を\mathbb{R}^2に埋め込む必要があり、埋め込み方に任意性が生じる
% When using the Wasserstein distance, it is necessary to embed the one-dimensional real line $\mathbb{R}$ into the two-dimensional Euclidean space $\mathbb{R}^2$. However, this embedding process introduces an arbitrary choice, as there are infinitely many ways to map $\mathbb{R}$ into $\mathbb{R}^2$.
% GW距離は各空間の内在的な距離しか使わないので、unambiguousである。
thanks to \eqref{eq:ggw}, equivalent to take
\begin{align}\label{eq:Sigma_r2}
    \Sigma_r = \begin{bmatrix}
    10 & 0 \\
    0 & 0 \\
    \end{bmatrix}.
\end{align}
Figure \ref{fig:traj} presents the trajectories of one hundred samples from the controlled process when the target distribution is degenerate distribution.
% The GW distance can be defined across different spaces, and thus can be properly defined for degenerate distributions.
It can be seen that the distribution of states actually stretches vertically to \rev{achieve a one-line alignment}.
% TODO: misleadingなので変えたほうがいいかもしれない

\begin{figure}
    \centering
    \includegraphics[scale=0.215]{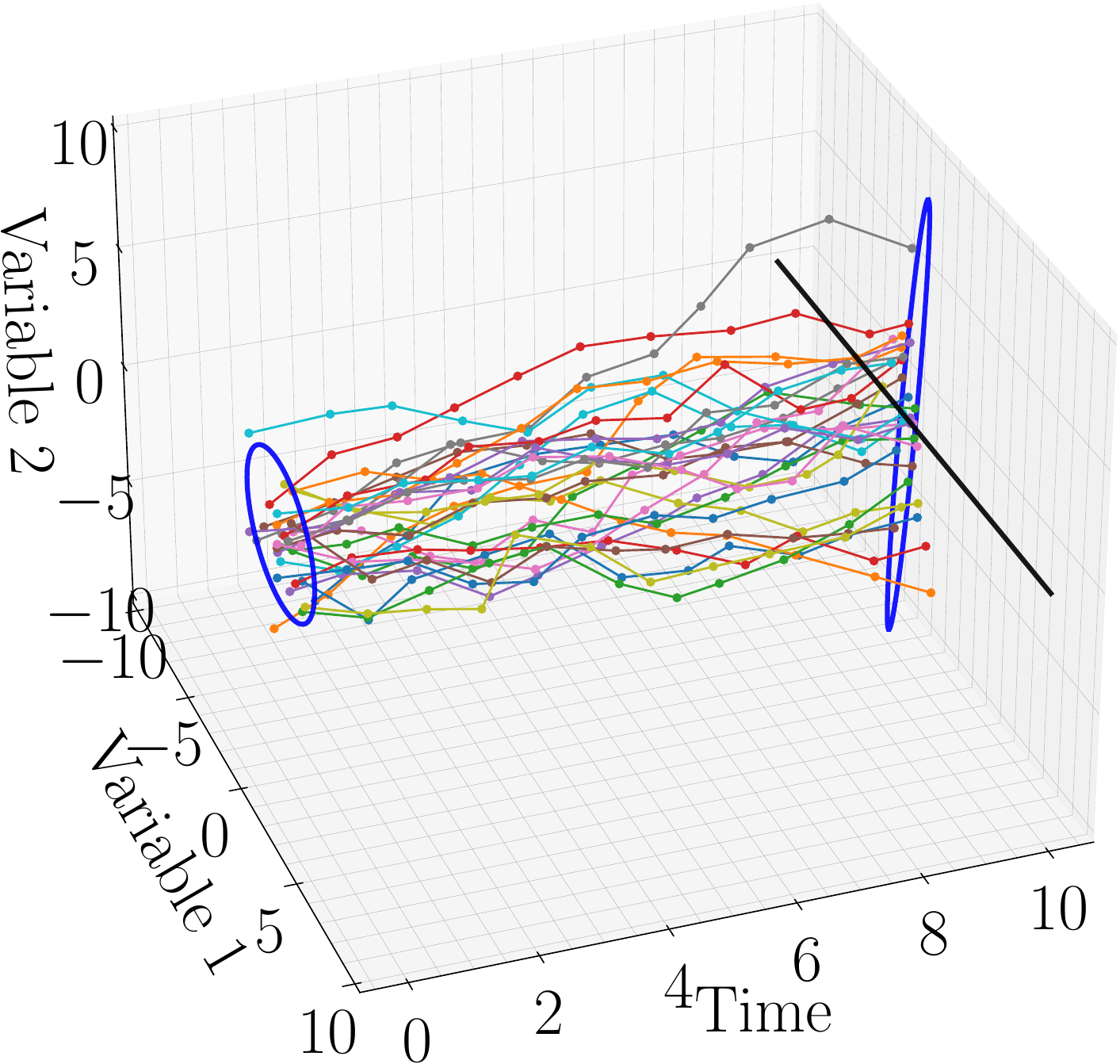}
    \caption{One hundred sample paths of the systems controlled by the optimal policy for $\lambda = 1$. 
The blue circle denotes 2$\sigma$ range of $\Sigma_0$ and $\Sigma_N$ and the black \kk{line} denotes the degenerated target distribution $\Sigma_r$ in \eqref{eq:Sigma_r2}.
} \label{fig:traj}
\end{figure}

\subsubsection{Comparison with the Wasserstein formulation}
Let us consider 
\begin{align}\label{eq:Sigma_r1}
    \Sigma_r = \begin{bmatrix}
    2 & 0 \\
    0 & 0.5 \\
    \end{bmatrix}
\end{align}
for which 
\begin{align}\label{eq:GGWuncontrolled}
    GGW^2(\rho_N,\rho_r)=6711.44
\end{align}
for the uncontrolled system.
Figure \ref{fig:result_sample} shows the snapshots of state covariance under the optimal control input obtained by DCA. 
It can be observed that the shape of the terminal distribution approaches that of the target distribution as $\lambda$ decreases.
\cyan{As shown below,} the terminal distribution \cyan{is} the one requiring the least energy among the rotated distributions of the target due to the rotational invariance of the GW distance.

Figure \ref{fig:costs_lambda} shows the relationship between the optimized control cost term and GW cost term in Eq. \eqref{eq:objective} for each $\lambda$.
As the value of $\lambda$ increases, the control cost rises, while the GW cost decreases. 
Conversely, as the value of $\lambda$ decreases, the control cost diminishes, and the GW cost increases.
Also, as $\lambda$ becomes smaller, the GW cost is almost close to zero.
It is noteworthy that, in comparison to the uncontrolled system in \eqref{eq:GGWuncontrolled}, our algorithm achieves a significant reduction in the GW cost.

% Figure \ref{fig:iteration}は、最適化アルゴリズムが異なる$lambda$の値で収束する挙動を示した図である。図中の反復0におけるスコアは、入力がゼロのときの初期スコアである。反復1では、適切な制御入力を加えることで、スコアが大幅に減少する。その後、反復 2 以降の DC アルゴリズムによってスコアは線形に収束する。$\lambda$が大きいときobjective functionの凸性が大きいので、linear convergenceのrateが大きいことがわかる。
% \rev{Figure \ref{fig:iteration} illustrates  the convergence behavior of the optimization algorithm for different values of $\lambda$. The initial score at iteration 0 corresponds to the case where the input is zero. 
% At the first iteration, the score exhibits a significant decrease by applying a control input. From the second iteration onwards, the objective function demonstrates linear convergence through the DC algorithm. It is worth noting that as the value of $\lambda$ increases, the convex component of the objective function becomes more dominant, resulting in a higher rate of linear convergence.}

% Wasserstein terminal cost problem \ref{balti_exact_2022}に対するadvantageを明確にするため、Fig \ref{fig:wasserstein_score}とFig. \ref{fig:wasserstein_opt}で、我々は$\Sigma_r$を回転角$\theta$で回転させた$\hat{\Sigma_r}(\theta)$を終端分布の分散とする目標分布を考え、Wasserstein terminal cost problemを解いた。$\theta$について, Control costは非凸な関数形を持ち(\ref{fig:wasserstein_score})、Control costを最小にする回転角$\theta^*$が我々の解の回転角とほとんど一致することがわかる(Fig. \ref{fig:wasserstein_opt})。Wasserstein terminal cost problemを用いて我々の問題を解くには、各thetaに対して、最適化を行う必要がある一方で、我々のGW terminal costの枠組みでは単一の最適化問題を解くだけで良い。
\rev{Finally, we clarify the advantage of our approach over the Wasserstein terminal cost problem \cite{balci_exact_2022}.
In Fig.~\ref{fig:anim_1}, the obtained terminal distribution is $\rho_N\approx \mathcal{N}(0,\hat\Sigma_r(\theta_{\rm GW}))$ with $\theta_{\rm GW} = 1.20 \mathrm{\, [rad]}$ where $\hat\Sigma_r(\theta)$ is obtained by rotating $\Sigma_r$ by an angle $\theta$, i.e., 
\[
    \hat\Sigma_r(\theta) := R(\theta)^T \Sigma_r R(\theta),\ R(\theta):= \begin{bmatrix}
\cos\theta & -\sin\theta \\
\sin\theta & \cos\theta
\end{bmatrix}. 
\]
It is shown in \cite{balci_exact_2022} that we can solve
\begin{align}
% & W_{\rm opt}(\theta) := \label{eq:Wass_objective} \\
& \min_{K_k, Q_k} \lambda \ex \left[ \sum_{k=0}^{N-1} {u_k^T R_k u_k} \right] + W^2(\rho_N, \mathcal{N}(0,\hat\Sigma_r(\theta)) \label{eq:Wass_objective}
\end{align}
by an SDP. 
Then, we solved this problem for a sufficiently small $\lambda$ (i.e., large terminal cost). 
The required control energy for the obtained optimal control input (i.e., the first term without $\lambda$ in \eqref{eq:Wass_objective}) is denoted by $W_{\rm opt}(\theta)$, which is shown in 
Fig.~\ref{fig:wasserstein_score}. It is noteworthy that the function exhibits a non-convexity.
We can also observe 
\[
    \theta_{\rm GW} \approx \theta^* := \argmin_\theta W_{\rm opt}(\theta), 
\]
which implies that the rotation angle obtained by the GW terminal cost problem minimizes the resulting control energy needed to realize the required shape (specified by $\Sigma_r$). 
From a computation cost point of view, while finding $\theta^*$ using the Wasserstein terminal cost approach requires performing optimization to compute $W_{\rm opt}(\theta)$ for each $\theta$, our GW terminal cost framework only necessitates solving a single optimization problem. 
Moreover, our approach remains computationally tractable even in high-dimensional settings, where the Wasserstein terminal cost approach becomes computationally intractable due to the exponential growth of the search space of the rotation matrix.
}

\begin{figure*}[thpb]
\centering
\begin{subfigure}[t]{0.33\textwidth}
    \centering
    \includegraphics[scale=0.33]{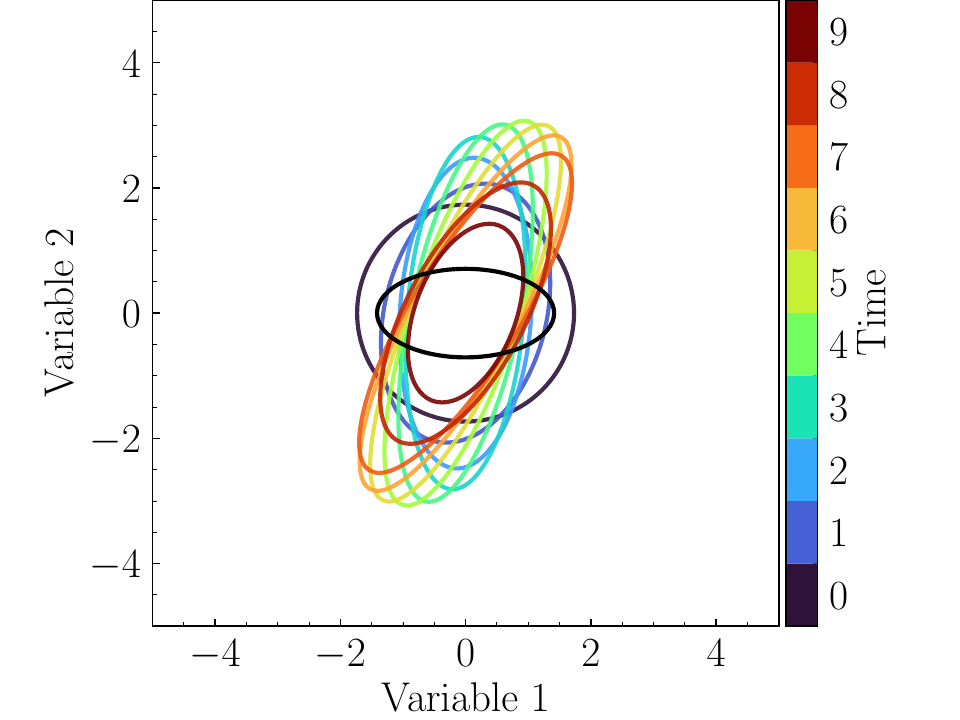}
    \caption{$\lambda = 1$} \label{fig:anim_1}
\end{subfigure}\hfill
\begin{subfigure}[t]{0.33\textwidth}
    \centering
    \includegraphics[scale=0.33]{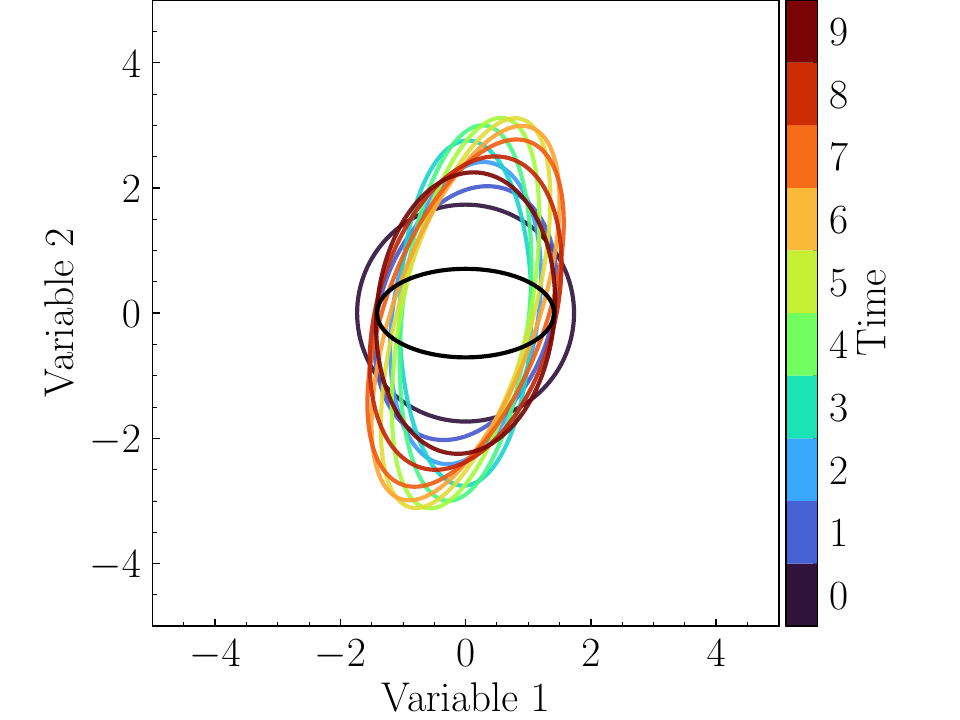}
    \caption{$\lambda = 100$} \label{fig:anim_100}
\end{subfigure}\hfill
\begin{subfigure}[t]{0.33\textwidth}
    \centering
    \includegraphics[scale=0.33]{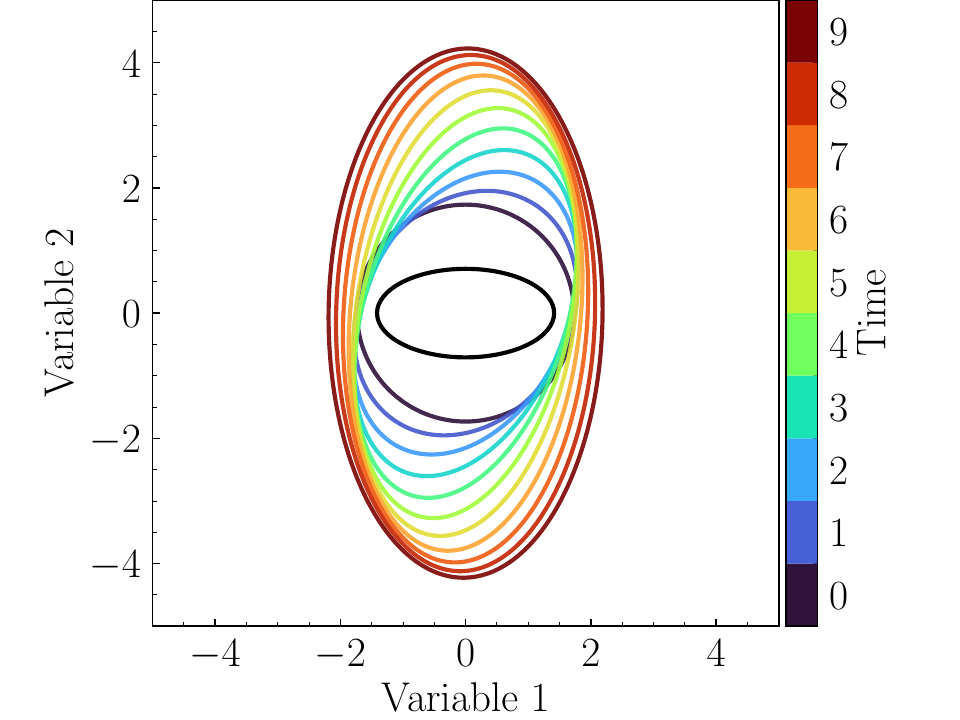}
    \caption{$\lambda = 10000$} \label{fig:anim_10000}
\end{subfigure}
\caption{The snapshots of the covariance matrices under the optimized policy for different values of $\lambda$. The colored lines indicate the $2\sigma$ range of $\Sigma_k$, while the black line represents $\Sigma_r$ in \eqref{eq:Sigma_r1}.}
\label{fig:result_sample}
\end{figure*}

\begin{figure}[t]
    \centering
    \includegraphics[scale=0.29]{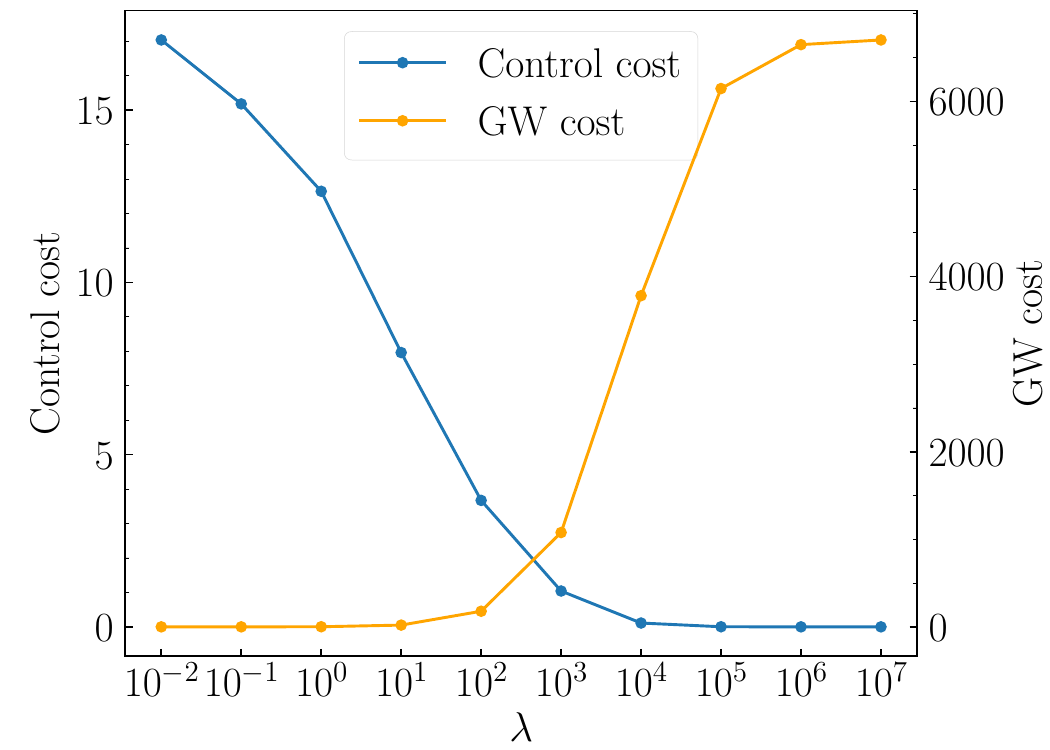}
    \caption{Relationship between the optimized control cost term and GW cost term in Eq. \eqref{eq:objective} for each $\lambda$.} \label{fig:costs_lambda}
\end{figure}

\begin{figure}
    \centering
    \includegraphics[scale=0.292]{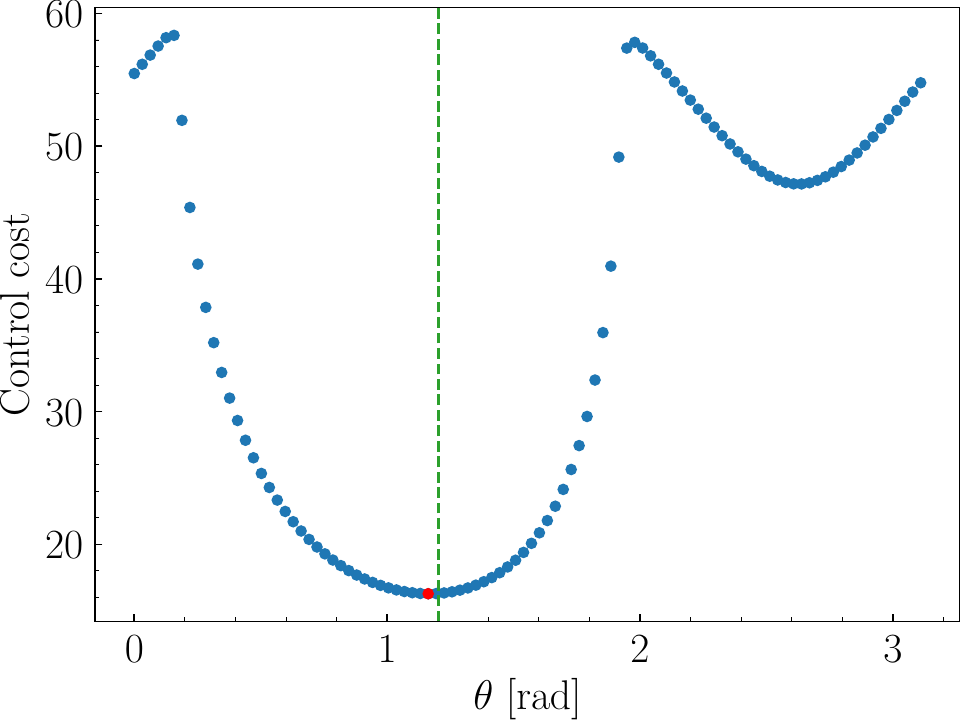}
    \caption{The required control energy $W_{\rm opt}(\theta)$ of the optimal input for the Wasserstein terminal cost problem in \eqref{eq:Wass_objective}. The green line and the red dot represent $\theta_{\rm GW}$ and
    $\theta^*$, respectively.  
} \label{fig:wasserstein_score}
\end{figure}

\section{Conclusion}
\label{sec:conclusion}
In this study, we addressed the optimal density control problem with the Gromov-Wasserstein distance as the terminal cost.
We showed that the problem is a DC programming problem and proposed an optimization method based on the DC algorithm.
Numerical experiments confirmed that \rev{the state distribution reaches the terminal distribution that can be realized with the minimum control energy among those having the specified shape.}

\rev{Future work includes the application of the proposed GW framework
to the transport between spaces equipped with different Riemannian metric structures or point clouds. }
% Specifically, we can consider a problem in which the point cloud is the initial distribution, and the ideal shape of the point cloud at the end time is specified.
% One application of such point cloud \rev{shape} control is distance-based formation control\cite{dimarogonas2008stability}, where the relative distance between agents is controlled to achieve a specified distance.
% In existing research, each agent is distinguished and the relative distance between each agent is specified respectively, whereas optimal distribution control with GW distance as the termination cost is viewed as a control that moves the placement of agents closer to the desired configuration shape without distinguishing between agents. 
\kk{Model predictive formation control based on a fast algorithm for optimal transport \cite{solomon_entropic_2016} is also a promising direction \cite{ITO2023110980}.
%TODO 計算量の問題を将来考えます。
\rev{The convergence and computation complexity of the proposed DC algorithm should also be investigated.}
}

\bibliography{gromov}
\bibliographystyle{IEEEtran}

\end{document}